
\documentclass [10pt,a4paper,reqno]{amsart}

\usepackage[utf8]{inputenc}
\usepackage{amsmath}
\usepackage{lmodern}
\usepackage{amssymb}
\usepackage{xypic}
\usepackage{tikz-cd}
\usepackage{enumerate}


\usepackage{amsmath,amssymb,amsfonts,xfrac,MnSymbol,graphicx,float}
\usepackage{amsthm}
\usepackage{cite}
\usepackage{hyperref}
\usepackage{todonotes}
\usepackage{enumitem}
\usepackage{graphicx}
\usepackage{color}
\usepackage{import}

\numberwithin{equation}{section}


\newtheorem{theorem}{Theorem}[section]

\newtheorem{lemma}[theorem]{Lemma}
\newtheorem{corollary}[theorem]{Corollary}

\newtheorem*{claim*}{Claim}

\theoremstyle{definition}
\newtheorem{definition}[theorem]{Definition}

\newtheorem{remark}[theorem]{Remark}
\newtheorem{example}[theorem]{Example}
\newtheorem{question}[theorem]{Question}

\newtheorem*{notation*}{Notation}




\newcommand{\bfx}{\mathbf{x}}

\newcommand{\calR}{\mathcal{R}}

\newcommand{\calW}{\mathcal{W}}
\newcommand{\calX}{\mathcal{X}}

\DeclareMathOperator{\Zar}{Zar}
\DeclareMathOperator{\Zars}{Zar^{+}}
\DeclareMathOperator{\FM}{FM}


\begin{document}

\title{A note on the Zariski topology on groups} 

\author{Gil Goffer}
\address{Department of Mathematics, University of California, San Diego, La Jolla, CA 92093, USA}
\email{ggoffer@ucsd.edu}

\author{Be'eri Greenfeld}
\address{Department of Mathematics, University of Washington, Seattle, WA 98195-4350, USA}
\email{grnfld@uw.edu}


\maketitle

\begin{abstract} 
We show that the semigroup Zariski topology on a group can be strictly coarser than the group Zariski topology on it, answering a question from \cite{E-P}.
\end{abstract}

\section{Introduction} \label{sec:Intro}


By analogy with algebraic geometry, one defines Zariski topologies on groups and on semigroups\footnote{Not to be confused with the geometric Zariski topology on algebraic groups.}: closed sets are given by common solutions to word maps, which play the role of polynomial functions \cite{Bryant,DS,DT,E-P,Markov_Eng}.

Let $S$ be a semigroup and $\left<\mathbf{x}\right>^+=\{1,\mathbf{x},\mathbf{x}^2,\dots\}$ a free monogenic monoid. The free product $S[\mathbf{x}]:=S*\left<\mathbf{x}\right>^+$ plays the role of a `polynomial ring', and any $W(\bfx)\in S[\mathbf{x}]$ 
takes the form:
\begin{equation*}\label{eq:w}
  W(\bfx) = s_0\mathbf{x}^{i_1}s_1\cdots s_n \mathbf{x}^{i_n} s_{n+1}  
\end{equation*}
for some integers $i_1,\dots,i_n\geq 0$ and some $s_0,\dots,s_{n+1}\in S$. The elements $s_0,\dots,s_{n+1}$ are called the \textit{coefficients} of $W(\bfx)$. We also interpret $W(\bfx)$ as a function $W:S\rightarrow S$ by evaluating $\bfx$. Namely, for $x\in S$ and $W(\bfx)$ as above, $W(x)=s_0x^{i_1}s_1\cdots s_n x^{i_n} s_{n+1}$. 

For each $W(\bfx),V(\bfx)\in S[\mathbf{x}]$, let:
\[
\mathcal{O}_{W,V} := \{x\in S\ |\ W(x) \neq V(x) \text{ in $S$} \}.
\]
The \textit{semigroup Zariski topology} (or positive Zariski topology) on $S$, denoted $\Zars(S)$, is defined by taking: 
\[\{ \mathcal{O}_{W,V} |\ W(\bfx),V(\bfx)\in S[\mathbf{x}]\}\] 
as a subbase of open sets. In \cite{E-P}, the authors characterized $\Zars$ for some important classes of semigroups.

Now let $\Gamma$ be a group and $\left< \mathbf{x}\right>=\{\dots, \bfx^{-1},1,\bfx,\dots\}$ an infinite cyclic group. In the same vein as above, any $W(\bfx)$ in the free product $\Gamma[\mathbf{x}]:=\Gamma*\langle \mathbf{x}\rangle$ 
takes the form:
\[
W(\bfx) = g_0\mathbf{x}^{i_1}g_1\cdots g_n \mathbf{x}^{i_n} g_{n+1},
\]
where now $i_1,\dots,i_n$ are arbitrary integers and the coefficients $g_0,\dots,g_{n+1}$ are elements of $\Gamma$. Also here, $W(\bfx)$ is interpreted as a function $W:\Gamma\rightarrow \Gamma$ by evaluating $\mathbf{x}$, so for $x\in \Gamma$, $W(x) = g_0x^{i_1}g_1\cdots g_n x^{i_n} g_{n+1}$. For any $W(\bfx)\in \Gamma[\mathbf{x}]$, let:
\[
\mathcal{O}_{W} := \{x\in \Gamma\ |\ W(x) \neq 1 \text{ in $\Gamma$}\}.
\]
The \textit{group Zariski topology} (or simply, \textit{Zariski topology}) on $\Gamma$, denoted $\Zar(\Gamma)$, is defined by taking:
\[\{ \mathcal{O}_{W} |\ W(\bfx)\in \Gamma[\mathbf{x}]\}\] 
as a subbase of open sets. The group and semigroup Zariski topologies are essential in the logical-geometric analysis of groups and semigroups. For instance, see \cite{Ra,KhM,Sela,Sela_sgp}. 

It is natural to compare the Zariski topology on a given group with the semigroup Zariski topology on it.
Evidently,
\[
\Zars(\Gamma) \subseteq \Zar(\Gamma).
\]
The following question naturally arises:
\begin{question}[{\cite[Question~2.8]{E-P}}]
Is $\Zars(\Gamma)=\Zar(\Gamma)$ for any group $\Gamma$?
\end{question}

Equality holds for abelian groups, for groups of finite exponent and for free groups (see \cite{CR} for a concrete description of $\Zar(F)$ for any free group $F$).
In this note we give a self-contained construction that demonstrates a negative answer to this question.

\begin{theorem} \label{thm:main1}
There exists a countable group $\Gamma$ for which $\Zars(\Gamma) \neq \Zar(\Gamma)$.
\end{theorem}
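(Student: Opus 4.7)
The plan is to exploit the asymmetry between positive and group words under inversion. The inversion map $\iota \colon x \mapsto x^{-1}$ is always a $\Zar$-homeomorphism of $\Gamma$, since applying $\iota$ to a group word $W(\bfx)$ produces another group word, namely $W(\bfx^{-1})$. In contrast, applying $\iota$ to a positive word $u(\bfx) = g_0\bfx^{n_1}g_1\cdots g_k$ yields $u(\bfx^{-1}) = g_0 \bfx^{-n_1} g_1 \cdots g_k$, which has negative exponents of $\bfx$ that cannot, in general, be moved outside when interior negative powers appear. This suggests that $\iota$ need not be $\Zars$-continuous: if we can find a $\Zars$-closed set $C$ whose image $C^{-1}$ fails to be $\Zars$-closed, then by $\Zar$-continuity of $\iota$ the set $C^{-1}$ is automatically $\Zar$-closed, witnessing the desired strict inclusion $\Zars(\Gamma) \subsetneq \Zar(\Gamma)$.

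Concretely, I would fix a target positive equation $u(\bfx) = v(\bfx)$, let $C := \{x \in \Gamma : u(x) = v(x)\}$ (automatically $\Zars$-closed), and aim to construct $\Gamma$ together with a distinguished element $p$ and a countable subset $\{s_n\}_{n\in\bbN} \subseteq C^{-1}$ such that \textbf{(a)} every positive equation $u'(\bfx) = v'(\bfx)$ satisfied by all $s_n$ is also satisfied at $p$---so that $p$ lies in the $\Zars$-closure of $C^{-1}$---while \textbf{(b)} $u(p^{-1}) \neq v(p^{-1})$, so $p \notin C^{-1}$. I would build $\Gamma$ as a direct limit: starting from a free group on $p, s_1, s_2, \dots$ together with auxiliary generators, I enumerate all pairs of positive words $(u_n', v_n')$ and at stage $n$ either impose a relation forcing $u_n'(p) = v_n'(p)$, or adjoin a fresh witness $s_m$ with $u_n'(s_m) \neq v_n'(s_m)$, guaranteeing that the pair is either satisfied at $p$ or fails on the set of witnesses (so it cannot obstruct (a)).

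The hard part will be maintaining (b) throughout the iterative construction: I must ensure that the accumulating relations do not eventually force $u(p^{-1}) = v(p^{-1})$ in the limit. I expect this step to require a combinatorial argument in the spirit of small-cancellation theory, or a carefully chosen chain of HNN-extensions, ensuring that at each stage the newly imposed relators are compatible with keeping the distinguished group element $u(p^{-1}) v(p^{-1})^{-1}$ nontrivial. Once this is verified, the theorem follows: $C^{-1}$ is $\Zar$-closed---being the solution set of the group equation $u(\bfx^{-1}) = v(\bfx^{-1})$---yet by (a) and (b) it is not $\Zars$-closed.
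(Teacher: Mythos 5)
Your guiding idea---that the discrepancy between $\Zar$ and $\Zars$ must come from a group word with mixed signs of $\bfx$, conveniently packaged via the inversion map---is the right one, and it is essentially what the paper does: there the $\Zar$-closed witness is the solution set of the mixed-sign equation $a_1\bfx^{-1}a_2\bfx a_3\cdots a_{k-1}\bfx^{-1}a_k\bfx=1$, which contains an infinite set $X=\{x_1,x_2,\dots\}$ shown to be $\Zars$-dense, yet misses the free generator $a_{k+1}$. However, as written your argument has a genuine gap, in fact two. First, and most seriously, the entire mathematical content of the theorem is concentrated in the step you defer: constructing $\Gamma$ so that the accumulating relations forcing $u_n'(p)=v_n'(p)$ never force $u(p^{-1})=v(p^{-1})$ in the limit. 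This is not a routine compatibility check to be supplied later; it is the heart of the proof. The paper resolves it with a concrete small-cancellation presentation (relators $w_i=a_1x_i^{-1}a_2x_ia_3\cdots a_{k-1}x_i^{-1}a_kx_i$, which satisfy $C'(\tfrac{1}{2k})$ because all pieces are single letters) together with a Greendlinger-type rewriting argument showing that no nontrivial word decomposing as $V^+V^-$, with $V^+$ positive and $V^-$ negative in a letter $x_m$ occurring in $V$, can represent the identity; this is exactly what rules out any nonconstant positive equation $P(x_m)=Q(x_m)$. Until you supply an argument of this strength, you have a plan, not a proof.

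Second, your condition (a) is too weak to place $p$ in the $\Zars$-closure of $C^{-1}$. The sets $\{x: u'(x)=v'(x)\}$ form only a sub-basis of closed sets: basic closed sets are \emph{finite unions} of these, and general closed sets are intersections of basic ones. If $\{s_n\}\subseteq C_1\cup\cdots\cup C_r$ with each $C_j$ sub-basic and $p\notin C_j$ for all $j$, no single positive equation need be satisfied by all the $s_n$, so (a) gives you nothing. You need the stronger statement that every sub-basic $\Zars$-closed set avoiding $p$ meets the witness set in only finitely many points; then a finite union of such sets still cannot contain the infinite witness set. This is precisely how the paper's density lemma is organized, and your diagonalization would have to be adjusted accordingly: at stage $n$ you must arrange that $u_n'=v_n'$ fails at all but finitely many witnesses, not merely at one freshly adjoined $s_m$.
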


Section \ref{sec:preliminaries} includes necessary preliminaries on presentations of groups and the small cancellation property, and in Section \ref{sec:Zar neq Zar+} we prove Theorem \ref{thm:main1}. 
In the end of this note we address the relation between the Zariski and Fr\'echet-Markov topologies on semigroups and also include, as a side remark, an answer to \cite[Question~2.7]{E-P}.

\subsection*{Acknowledgements.} 
We are gratefully thankful to Chloé Perin for her advice regarding equations over free groups, and for bringing the interesting paper \cite{CR} to our attention.





\section{Presentations of groups}\label{sec:preliminaries}
Let $B$ be a set, to which we refer to as an alphabet. We denote by $F(B)$ the free group generated by elements of $B$. By convention, all words in $F(B)$ are freely reduced, and therefore when we write $W=U$, it means that the words $W,U\in F(B)$ are equal letter by letter. For a word $W\in F(B)$, we denote by $|W|$ its length.
We write $U\subseteq W$ to say that $U$ is a \emph{subword} of $W$, namely that there exists $V_1,V_2\in F(B)$ such that $W$ equals to the concatenation $V_1UV_2$. If $V_1=e$ then $U$ is called an \emph{initial segment} of $W$. We write $U\subset W$ if $U\subseteq W$ but $U\neq W$. A word $U\in F(B)$ is said to \emph{appear} or to \emph{occur} in a word $W\in F(B)$ if $U\subseteq W$. A letter $b\in B$ is said to appear or to occur in $W$ if $U=\{b\}\subseteq W$.

Let $b\in B$. A word $W\in F(B)$ is said to be \emph{positive in $b$} if $W$ does not contain any appearance of the letter $b^{-1}$, and \emph{negative in $b$} if it does not contain any appearance of the letter $b=b^{+1}$. 
Of course, any word not containing the letter $b$ is both positive in $b$ and negative in $b$, and many words, for example the word $bab^{-1}$, are neither positive, nor negative, in $b$.

Let $W=b_1b_2\cdots b_n$ be a word in $F(B)$ and let $j\in \{0,1,\dots,n-1\}$.
The \emph{$j^{th}$ cyclic permutation} of $W$ is the word $b_{j+1}b_{j+2}\cdots b_n b_1\cdots b_j$. 
A set $\calR\subset F(B)$ is called \emph{symmetrized} if for every $W\in \calR$, all cyclic permutations of $W$ and of $W^{-1}$ are included in $\calR$.
A nontrivial word $U\in F(B)$ is called a \emph{piece} with respect to a set $\calR\subset F(B)$ if there exist two distinct elements $W_1,W_2\in \calR$ that have $U$ as maximal common initial segment. 

The following condition is often called the \textit{metric small cancellation condition}. 
\begin{definition}
Let $0 < \lambda < 1$. 
Let $\calR\subset F(B)$ be a symmetrized set of reduced words. The presentation $$\Gamma = \langle B \mid \calR \rangle$$ is said to satisfy \emph{$C'(\lambda)$ small cancellation condition} if whenever $U$ is a piece with respect to $\calR$ and $U$ is a subword of some $W\in \calR$, then: $$|U|<\lambda|W|.$$
\end{definition}

The main result regarding the metric small cancellation condition is the following statement (see \cite[Theorem 4.4 in Ch. V]{LC}).

\begin{lemma}[Greendlinger's lemma]\label{lem:Greendlinger's}
Let $\calR\subset F(B)$ be a symmetrized set. 
Let $G=\langle B \mid \calR\rangle $ be a group presentation satisfying the $C'(\lambda)$ small cancellation condition, where $0 \leq \lambda \leq \frac{1}{6}$. Let $V\in F(B)$ be a non-trivial freely reduced word such that $V = 1$ in $G$. Then there is a subword $U$ of $V$ and a word $R\in \calR$ such that $U$ is an initial segment of $R$ and such that
\[ |U| > (1-3\lambda)|R|. \]
\end{lemma}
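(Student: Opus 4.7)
The plan is to prove Greendlinger's lemma via the classical van Kampen diagram approach combined with a combinatorial curvature (``shell-extraction'') argument. First I would apply van Kampen's lemma: because $V$ is a non-trivial freely reduced word equal to $1$ in $G=\langle B\mid \calR\rangle$, there exists a simply connected reduced van Kampen diagram $D$ over this presentation whose boundary, read cyclically from some basepoint, spells $V$. Here \emph{reduced} means that no two distinct faces sharing an edge carry mutually inverse boundary labels along that edge. A basic observation is that every interior edge of $D$ must be labelled by a \emph{piece} with respect to $\calR$: otherwise the two faces incident to such an edge would have boundary labels sharing a non-piece common initial segment, which forces them to be cyclically equal in the symmetrized set $\calR$, contradicting reducedness.

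Next, I would produce in $D$ a \emph{shell}, i.e.\ a $2$-cell $F$ whose boundary meets $\partial D$ in a single connected sub-arc and whose boundary contains at most three interior edges of $D$. This is the combinatorial heart of the proof. One assigns angles to the corners of the $2$-cells and invokes a combinatorial Gauss--Bonnet-type identity: the total combinatorial curvature of the disk $D$ equals $2\pi$, while the hypothesis $C'(\lambda)$ with $\lambda\le 1/6$ forces the curvature at every interior vertex and every interior face of $D$ to be non-positive. Consequently the $2\pi$ of curvature is concentrated on boundary faces, and a bookkeeping argument yields the existence of (in fact, at least two) shell faces of the kind described.

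Once such a shell $F$ is in hand, the conclusion follows directly from $C'(\lambda)$. Let $R\in\calR$ be the boundary label of $F$; each of the at most three interior edges along $\partial F$ is labelled by a piece, hence by a word of length strictly less than $\lambda|R|$. The remaining portion of $\partial F$ lies along $\partial D$ and spells a subword $U$ of $V$ (choosing, if necessary, a shell whose boundary arc avoids the basepoint of $V$, which is possible since at least two shells exist) of length
\[
|U| \;>\; |R| - 3\lambda|R| \;=\; (1-3\lambda)|R|.
\]
Since $\calR$ is symmetrized, we may cyclically rotate $R$ so that $U$ becomes an initial segment of the rotated element, which still lies in $\calR$, giving the desired conclusion.

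The main obstacle is the shell-extraction step: assigning the angles, verifying non-positivity of the interior curvature under the $C'(\lambda)$ hypothesis, and handling degenerate configurations (faces meeting $\partial D$ along disconnected arcs, pendant or nested subdiagrams, the borderline case $\lambda=1/6$, and diagrams with a single face) all require careful combinatorial bookkeeping. Everything else is either a direct application of van Kampen's lemma or a routine unwinding of the definition of a piece and of $C'(\lambda)$.
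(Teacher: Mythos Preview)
The paper does not prove this lemma: it is quoted as a black box with a reference to \cite[Ch.~V, Theorem~4.4]{LC}. So there is no in-paper proof to compare against.

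Your outline is the classical van Kampen diagram proof, essentially the one in Lyndon--Schupp, and it is correct in structure. Two small points of precision. First, it is not individual interior \emph{edges} but maximal interior \emph{arcs} between two adjacent faces whose labels are pieces; one usually passes to a consolidated diagram so that each such arc becomes a single edge, after which your statement is literally true. Second, in Lyndon--Schupp the shell-extraction is done via the structure theorem for $[3,6]$-maps (an Euler-characteristic count) rather than by assigning angles and invoking combinatorial Gauss--Bonnet; the two formulations are equivalent, and your curvature phrasing is fine. Your handling of the basepoint issue (two shells, so one avoids the basepoint) and the degenerate one-face case is the right idea; in the one-face case $V$ is itself a cyclic conjugate of a relator, hence lies in the symmetrized $\calR$, and one takes $U=R=V$.
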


\section{Proof of main theorem}
\label{sec:Zar neq Zar+}
Fix an 
even integer $k\geq 8$. Let $A=\{a_1,a_2\dots,a_{k+1}\}$ and $X=\{x_1,x_2,\dots\}$. 
Let $F(A\cup X)= \left<a_1,\dots,a_{k+1},x_1,x_2,\dots\right>$ be the free group generated by $A\cup X$. For $i\in \mathbb{N}$ define: \[w_i:=a_1x_i^{-1}a_2x_ia_3x_i^{-1}a_4x_ia_5\cdots a_{k-1}x_i^{-1}a_kx_i.\]

For $j\in \{0,\dots,2k-1\}$ and $i\in \mathbb{N}$, denote by $w_{i,j}$ the $j^{th}$ cyclic permutation of $w_i$. Then the set $\calW=\{w_{i,j}^{\pm 1}\}_{i,j}$ is symmetrized.
Consider the group $\Gamma$ defined as:
\[
\Gamma = 
\langle A\cup X \mid w_1,w_2,\dots\rangle =  \langle A\cup X \mid \calW \rangle
\]



\begin{lemma} \label{lem:Gamma_r is SC}
$\Gamma=\langle A\cup X \mid \calW \rangle$ satisfies $C'(\frac{1}{2k})$-small cancellation condition.
\end{lemma}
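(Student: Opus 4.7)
The plan is to prove the stronger statement that every piece in $\calW$ has length at most one. Since each element of $\calW$ has length exactly $2k$, this bound yields the $C'(\tfrac{1}{2k})$ small cancellation condition.

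First I would record the structural features of $\calW$: every $W\in \calW$ is a cyclic permutation of some $w_i^{\pm 1}$, and in such a word consecutive letters strictly alternate between an $A$-letter (possibly inverted) and some $x_j^{\pm 1}$. In a cyclic permutation of $w_i$ the $A$-letters appearing are the distinct positive letters $a_1,\dots,a_k$; in a cyclic permutation of $w_i^{-1}$ they are the distinct negative letters $a_1^{-1},\dots,a_k^{-1}$. Moreover, the only indexed variables appearing in a cyclic permutation of $w_i^{\pm 1}$ are $x_i$ and $x_i^{-1}$.

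I would then case-analyze two distinct $W_1, W_2 \in \calW$ and their maximal common initial segment $U$. If $W_1, W_2$ involve different indices $i\ne i'$, any $x$-letter in $U$ would force $i=i'$, so $U$ contains no $x$-letter; by the alternating pattern $|U|\leq 1$. If $W_1, W_2$ are distinct cyclic permutations of the same word $w_i^{\epsilon}$, then the distinctness of $a_1,\dots,a_k$ forces any shared $a$-letter in $U$ to come from the same cyclic shift of $w_i^{\epsilon}$, contradicting $W_1\ne W_2$; so $U$ contains no $a$-letter and $|U|\leq 1$. Finally, if $W_1$ is a cyclic permutation of $w_i$ and $W_2$ of $w_i^{-1}$, the opposing signs on the $A$-letters preclude any $a$-letter in $U$, so once more $|U|\leq 1$.

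The whole proof amounts to the case analysis above; the only point requiring care is exhaustiveness, which is clear since a pair of distinct elements of $\calW$ either comes from different indices (Case 1) or from the same index, with matching or opposing signs (Cases 2 and 3). I do not expect a real technical obstacle, as the rigidity forcing $|U|\leq 1$ is precisely the distinctness of the letters $a_1,\dots,a_k$ together with the fact that distinct $w_i$'s use distinct variables $x_i$.
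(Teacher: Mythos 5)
Your proof is correct and follows essentially the same route as the paper: both arguments show that every piece has length exactly $1$ by the case analysis on whether the two relators come from different indices, the same index with the same sign, or the same index with opposite signs (you simply spell out the justifications that the paper leaves implicit). One pedantic point, shared with the paper's own proof: under the strict-inequality convention $|U|<\lambda|W|$, a piece of length $1$ in a relator of length $2k$ only gives $C'(\lambda)$ for $\lambda>\tfrac{1}{2k}$, e.g.\ $C'(\tfrac{1}{k})$, which is still amply sufficient for the later application of Greendlinger's lemma.
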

\begin{proof}
Any piece $U$ with respect to $\calW$ that occurs as a common initial segment of $W_1=w_{i,j}^{\pm 1},W_2=w_{k,l}^{\pm 1}$ with $i\neq k$ is of the form $U=a_r^{\pm 1}$ for some $1\leq r\leq k$. If $i=k$, then a common initial segment of $W_1$ and $W_2$ must be of the form $U=x_i^{\pm 1}$. 


Therefore, any piece $U$ with respect to $\calW$ has length $1$. Since all words in $\calW$ have length $2k$, the claim is proved.
\end{proof}


In what follows, we often talk about words $U,W\in F(A\cup X)$ that are equal in the group $\Gamma$. For clarification, we note that when we write \emph{$W=U$ in $\Gamma$}, we mean that $W$ and $U$ represent the same element in $\Gamma$.
This does not imply that $W$ and $U$ are equal letter by letter. We also clarify that when we say that a word $W=b_1b_2\cdots b_n\in F(B)$ decomposes as $W=W_1W_2$ this means that for some $i\in \{1,\dots,n-1\}$, $W_1=b_1\cdots b_i$ and $W_2=b_{i+1}\cdots b_n$.

\begin{lemma}\label{lem:V=1 dont part to V+V-}
Let $m\in \mathbb{N}$ and $V\in F(A\cup X)$, and suppose that $x_m$ appears in $V$. Suppose further that there exist words $V^+,V^-\in F(A\cup X)$ such that $V^+$ is positive in $x_m$, $V^-$ is negative in $x_m$, and $V$ decomposes as $V=V^+V^-$. Then $V\neq 1$ in $\Gamma$.
\end{lemma}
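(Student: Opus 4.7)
The plan is to suppose for contradiction that $V=1$ in $\Gamma$ and apply Greendlinger's lemma. By Lemma~\ref{lem:Gamma_r is SC} the presentation of $\Gamma$ satisfies $C'(\tfrac{1}{2k})$, and since $\tfrac{1}{2k}\leq\tfrac{1}{16}<\tfrac{1}{6}$, Lemma~\ref{lem:Greendlinger's} yields a subword $U\subseteq V$ and a word $R\in\calW$ with $U$ an initial segment of $R$ and
\[
|U|>\bigl(1-\tfrac{3}{2k}\bigr)\cdot 2k=2k-3,
\]
so $|U|\geq 2k-2$. The word $R$ is a cyclic permutation of some $w_i^{\pm 1}$, and I will split on whether $i=m$.

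If $i=m$, I exploit the alternating structure of $w_m$: in any cyclic permutation of $w_m^{\pm 1}$, the letters $x_m^{\pm 1}$ alternate in sign (separated by $a$-letters). Since $U$ is missing at most two letters of $R$, it retains at least $k-1\geq 7$ of these $x_m^{\pm 1}$-letters, still alternating, giving at least three of each sign. But $U$ is a subword of $V=V^+V^-$, and the positive/negative hypothesis forces every $x_m$-letter of $V$ (all inside $V^+$) to precede every $x_m^{-1}$-letter (all inside $V^-$); this ordering restricts to $U$. A sign-alternating sequence with at least two letters of each sign cannot place all positive letters before all negative ones, and this is the contradiction in this case.

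If $i\neq m$, then $R$ has no $x_m^{\pm 1}$-letters, and consequently neither does $U$. Writing $R=UU'$ with $|U'|\leq 2$, and using $R=1$ in $\Gamma$, I substitute $(U')^{-1}$ for $U$ in $V$ and freely reduce to obtain a word $V'$ strictly shorter than $V$ and still equal to $1$ in $\Gamma$. Neither the removed subword $U$ nor its replacement $(U')^{-1}$ contains any $x_m^{\pm 1}$-letter, so the $x_m$-letters of $V$ are not directly disturbed; a short case analysis on whether $U$ sits inside $V^+$, inside $V^-$, or straddles their interface confirms that $V'$ again carries a decomposition into a part positive in $x_m$ followed by a part negative in $x_m$. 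Provided $x_m$ still appears in $V'$, the inductive hypothesis on $|V|$ forces the contradiction $V'\neq 1$ in $\Gamma$; the base case $|V|<2k-2$ is immediate, since then $V$ has no subword long enough to play the role of Greendlinger's $U$, so already $V\neq 1$ in $\Gamma$.

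The main subtlety sits in the second case, when the short substituted word $(U')^{-1}$ fully cancels through free reduction and exposes the tail of the retained piece of $V^+$ to the head of $V^-$: in principle, a final $x_m$-letter of $V^+$ could then cancel with a leading $x_m^{-1}$-letter of $V^-$ and drop the $x_m$-count. The core technical point is to use the bound $|U'|\leq 2$, together with the fact that $V$ was freely reduced to begin with, to confine this cascade enough that at least one $x_m$ of $V$ always survives, keeping $V'$ within the scope of the inductive hypothesis and completing the argument.
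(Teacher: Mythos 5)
Your strategy is in essence the paper's own: they run the Greendlinger replacement process all the way down to the empty word and then examine the first step that uses a relator from the $w_m$-family, whereas you unroll the same process as an induction on $|V|$ with a case split at the first application. Your treatment of the case $i=m$ (at least $k-1$ consecutive, sign-alternating occurrences of $x_m^{\pm1}$ inside $U$, against the constraint that in $V=V^+V^-$ every $x_m$ precedes every $x_m^{-1}$) is correct and matches the paper's final contradiction.

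The gap is precisely at the point you flag as ``the core technical point'', and it cannot be closed: in the case $i\neq m$ the free reduction can destroy \emph{every} $x_m$-letter, no matter how short $U'$ is. Take $m=2$ and $V=x_2\,w_1\,x_2^{-1}$. This word is freely reduced, contains $x_2$, decomposes as $V^+V^-$ with $V^+=x_2w_1$ positive in $x_2$ and $V^-=x_2^{-1}$ negative in $x_2$, and satisfies $V=1$ in $\Gamma$ because $w_1$ is a relator. Since no word of $\calW$ contains both $x_2^{\pm1}$ and $x_1^{\pm1}$, any Greendlinger subword of $V$ of length $\geq 2k-2$ lies inside the middle $w_1$, so you land in your second case; substituting $(U')^{-1}$ and reducing collapses $V$ to $x_2x_2^{-1}$ and then to the empty word, so no $x_2$ survives and the induction cannot proceed. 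In fact this $V$ satisfies every hypothesis of the lemma and equals $1$ in $\Gamma$, so the lemma as stated is false and no argument can repair your step. You should be aware that the paper's own proof has the same defect: it asserts that if $i_t\neq m$ for all $t$ then $x_m$ occurs in each $V_t$, which ignores the free reductions needed to keep Lemma~\ref{lem:Greendlinger's} applicable and fails on the example above. A correct statement needs an extra hypothesis excluding words like $x_mw_ix_m^{-1}$ that are already trivial ``for free''; in the intended application (Lemma~\ref{lem:dense}) such $V$ arise only when $C=\Gamma$, which is excluded there, so the main theorem is presumably salvageable, but neither your proof nor the paper's proves the lemma as written.
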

\begin{proof}
Suppose toward contradiction that $V= 1$ in $\Gamma$. By an iterative application of Lemma \ref{lem:Greendlinger's}, we inductively construct a sequence $V=V_0,V_1,V_2,\dots,V_n$ of words such that:
\begin{enumerate}
    \item $|V_0|>|V_1|>\dots>|V_n|$
    \item $V_0=V_1=\dots=V_n=1$ in $\Gamma$
    \item $V_n=1$
    \item Each word $V_{t+1}$ is achieved from its preceding $V_t$ 'by one relation only'. Namely, for each $t\in \{0,\dots,n-1\}$ there exist $\epsilon_t\in \{\pm 1\}$, $i_t\in \mathbb{N}$, and $j_t\in \{0,1,\dots,2k-1\}$ such that $W_t=w^{\epsilon_t}_{i_t,j_t}$ decomposes as $W_t=U_tS_t$, and the word $V_{t+1}$ is achieved from $V_t$ by replacing an occurance of the subword $U_t\subseteq V_t$ with $S_t^{-1}$.   
\end{enumerate}

As a base, set $V_0=V$. Suppose that $V_t$ is already defined.
By Lemma \ref{lem:Gamma_r is SC}, $\calW$ satisfies $C'(\frac{1}{2k})$-small cancellation condition. Therefore, by Lemma \ref{lem:Greendlinger's}, there exist a word $W_t:=w^{\varepsilon_t}_{i_t,j_t}\in \calW$ for some $\varepsilon_t\in \{\pm 1\}$, $i_t\in \mathbb{N}$, and $j_t\in\{0,\dots,2k-1\}$, such that the word $W_t$ decomposes as $W_t=U_tS_t$, 
where $U_t$ is a subword of $V_t$, and
\[|U_t|>\left(1-\frac{3}{2k}\right)|W_t|.\]
Note that by the choice of $k$, $|U_t|>\frac{1}{2}|W_t|$, and therefore $|U_t|>|S_t|=|S_t^{-1}|$. 
Set $V_{t+1}$ to be the word obtained from $V_t$ by replacing the occurrence of the subword $U_t$ by $S_t^{-1}$. (If the subword $U_t$ occurs in $V_t$ more than once, we replace one of the occurrences.). Thus $|V_{t+1}|<|V_t|$. Moreover, since $U_t=S_t^{-1}$ in $\Gamma$, we have that $V_{t+1}=V_t$ in $\Gamma$. Since each word in the sequence is strictly shorter than its predecessor, there exists $n$ with $|V_n|=0$, namely, with $V_n=1$ (as words, and not just in $\Gamma$). This completes the inductive construction.



Recall that by assumption, $x_m$ occurs in $V_0=V$. It follows that for some $t\in \{0,1,\dots,n-1\}$ we have $i_t=m$. 
Indeed, supposing the converse, namely that $i_t\neq m$ for all $t$, then for all $t$ the subword $U_t\subset W_t$ does not contain the letter $x_m$. 
Then $x_m$ must occur in each of the words $V_1,V_2,\dots,V_n$, contradicting the fact $V_n=1$.

Let $t$ be the minimal number for which $i_t=m$. 
So $W_t$ is a cyclic permutation of $w_m$ or of $w_m^{-1}$, and $V_t$ contain the initial segment $U_t$ of $W_t$. By the choice of $k$, the subword $U_t$ of $W_t$ is of length at least $(1-\frac{3}{2k})|W_t|=2k-3\geq 13$. Based on the structure of $w_m^{\pm 1}$, $U_t$ must contain either the subword:
$$U=x_ma_{l}x_m^{-1}a_{l+1}x_m$$
or its inverse $U^{-1}$, for some $l\in \{1,\dots,k-1\}$. Without loss of generality suppose that $U_t$ contains that subword $U$. In particular, $U$ is a subword of $V_t$.

By assumption, $V=V^+V^-$.
By minimality of $t$, the letter $x_m$ does not appear in any $U_s$ for $s<t$. It follows that also $V_t$ can be decomposes as $V_t=V_t^+V_t^-$.
Decompose $U$ as $U=U^+U^-$ such that $U^+\subset V_t^+,U^-\subset V_t^-$ (each of $U^+,U^-$ might be empty). Then $U^+$ is positive in $x_m$ and $U^-$ is negative in $x_m$.
But this is a contradiction to the structure of $U$, as it is easily seen that for any decomposition $U=U^+U^-$, either $U^+$ or $U^-$ involves both positive and negative powers of $x_m$.
\end{proof}

\begin{lemma} \label{lem:dense}
The set $X$ is dense in $\Gamma$ with respect to $\Zars(\Gamma)$.
\end{lemma}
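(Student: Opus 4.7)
The plan is to show that every nonempty basic open set of $\Zars(\Gamma)$ meets $X$. Such a set has the form $B=\bigcap_{i=1}^{n}\mathcal{O}_{W_{i},V_{i}}$ with each $W_{i}(\mathbf{x}),V_{i}(\mathbf{x})\in \Gamma[\mathbf{x}]$ positive in $\mathbf{x}$. The dichotomy I aim to prove is: \emph{for any such $W,V$, either $W$ and $V$ induce the same function $\Gamma\to\Gamma$, or the set $S_{W,V}:=\{m\in\mathbb{N}\,:\,W(x_{m})=V(x_{m})\}$ is finite.} Granted this, suppose $y_{0}\in B$; then for each $i$ we have $W_{i}(y_{0})\neq V_{i}(y_{0})$, so $W_{i},V_{i}$ do not induce the same function, and hence each $S_{W_{i},V_{i}}$ is finite. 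The finite union $\bigcup_{i=1}^{n}S_{W_{i},V_{i}}$ is still finite, so any $m$ outside it gives $x_{m}\in B\cap X$.

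To prove the dichotomy, write $W=g_{0}\mathbf{x}^{i_{1}}g_{1}\cdots\mathbf{x}^{i_{n}}g_{n+1}$ and $V=h_{0}\mathbf{x}^{j_{1}}h_{1}\cdots\mathbf{x}^{j_{r}}h_{r+1}$, normalized so that all exponents are positive (any $\mathbf{x}^{0}$ is absorbed into adjacent coefficients). Fix, once and for all, a canonical reduced-word representative in $F(A\cup X)$ for each element of $\Gamma$ appearing as a coefficient, and denote the chosen representatives by $\bar{g}_{i},\bar{h}_{j}$. Choose $M$ so that no $x_{s}$ with $s>M$ occurs in any $\bar{g}_{i}$ or $\bar{h}_{j}$. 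For $m>M$ the element $W(x_{m})V(x_{m})^{-1}\in\Gamma$ is represented by the word
\[
U_{m}=\bar{g}_{0}x_{m}^{i_{1}}\bar{g}_{1}\cdots x_{m}^{i_{n}}\bar{g}_{n+1}\bar{h}_{r+1}^{-1}x_{m}^{-j_{r}}\cdots x_{m}^{-j_{1}}\bar{h}_{0}^{-1}\in F(A\cup X),
\]
which visibly splits as $U^{+}U^{-}$ with $U^{+}$ positive and $U^{-}$ negative in $x_{m}$. By Lemma \ref{lem:V=1 dont part to V+V-}, if $x_{m}$ survives the free reduction of $U_{m}$, then $W(x_{m})\neq V(x_{m})$ in $\Gamma$.

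Therefore $W(x_{m})=V(x_{m})$ for $m>M$ forces every $x_{m}$ in $U_{m}$ to cancel during free reduction. Propagating this cancellation from the inner junction $\bar{g}_{n+1}\bar{h}_{r+1}^{-1}$ outward forces, successively, $\bar{g}_{n+1}=\bar{h}_{r+1}$ as reduced words, $i_{n}=j_{r}$, $\bar{g}_{n}=\bar{h}_{r}$, $i_{n-1}=j_{r-1}$, and so on; this ultimately yields $n=r$, $i_{k}=j_{k}$ for all $k$, and $\bar{g}_{k}=\bar{h}_{k}$ for $k\geq 1$, which by the canonical choice of representatives is equivalent to $g_{k}=h_{k}$ in $\Gamma$. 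The remaining factor $\bar{g}_{0}\bar{h}_{0}^{-1}$ is then a word not involving $x_{m}$, and being trivial in $\Gamma$ yields $g_{0}=h_{0}$. Hence $W$ and $V$ coincide as elements of $\Gamma[\mathbf{x}]$, in particular they induce the same function, completing the dichotomy. The main obstacle is this cancellation-propagation analysis, which must rule out all corner cases (for instance $i_{n}\neq j_{r}$ leaving a residual $x_{m}^{i_{n}-j_{r}}$, or $n\neq r$ leaving unmatched $x_{m}$-blocks) and takes place in the free group $F(A\cup X)$, not in $\Gamma$. The canonical-representative convention is essential: equal elements of $\Gamma$ may have many reduced-word forms in $F(A\cup X)$, and a naive choice could spuriously obstruct cancellation and derail the implication.
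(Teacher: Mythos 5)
Your proof is correct and follows the same essential route as the paper's: reduce to finitely many sub-basic conditions, pick $m$ beyond all letters occurring in the coefficients, and observe that $W(x_m)V(x_m)^{-1}$ splits as a word positive in $x_m$ followed by one negative in $x_m$, so that Lemma \ref{lem:V=1 dont part to V+V-} forbids it from being trivial. The one place you diverge is the endgame: the paper simply notes that since at least one of the two polynomials is non-constant, $x_m$ occurs in $P(x_m)Q(x_m)^{-1}$ and the lemma applies at once, whereas you add a cancellation-propagation argument showing that if every occurrence of $x_m$ were killed by free reduction at the junction, the two polynomials would have to coincide syllable-by-syllable as elements of $\Gamma[\mathbf{x}]$. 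That extra step is sound (the cancelled suffix of the positive part must equal, letter by letter, the inverse of the cancelled prefix of the negative part, which forces the exponents and coefficients to match and leaves only $\bar g_0\bar h_0^{-1}$), and it carefully covers the degenerate possibility that free reduction eliminates $x_m$ before Lemma \ref{lem:V=1 dont part to V+V-} is invoked --- a point the paper passes over more briskly; but it is a refinement of, not an alternative to, the paper's argument, and as you acknowledge it is only sketched, so if you keep it you should write out the syllable-matching in full.
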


\begin{proof}
It suffices to prove that every basic closed set in $\Zars(\Gamma)$ that contains $X$ is equal to $\Gamma$ itself. 

First, let $C\subset \Gamma$ be a non-empty sub-basic closed set in $\Zars(\Gamma)$. So $C$ is given by:
\[
C = \{\gamma\in \Gamma\ |\ P(\gamma)=Q(\gamma)\}
\]
where $P(\bfx),Q(\bfx)\in \Gamma[\bfx]$. 
Since $C\notin \{\emptyset,\Gamma\}$, at least one of $P(\bfx),Q(\bfx)$ is not constant.
There are only finitely many words in $F(X\cup A)$ that appear as coefficients in $P(\bfx)$ or in $Q(\bfx)$. Therefore, there exists large enough $m\in \mathbb{N}$ such that $x_m$ does not appear in any of the coefficients of $P(\bfx),Q(\bfx$).

We show that $x_m\notin C$. Indeed, suppose the converse. Then $P(x_m)= Q(x_m)$ in $\Gamma$, and so $V:=P(x_m)(Q(x_m))^{-1}=1$ in $\Gamma$. But note that $V^+:=P(x_m)$ is positive in $x_m$ and $V^-:=(Q(x_m))^{-1}$ is negative in $x_m$. Moreover, $V$ contains $x_m$, since at least one of $P(\bfx),Q(\bfx)$ is non-constant. This contradicts Lemma \ref{lem:V=1 dont part to V+V-}.

This proves that any sub-basic non-empty proper subset $C\subset \Gamma$ has a finite intersection with $X$.

Finally, since any basic closed set is a finite union of sub-basic closed sets, it follows that no proper closed subset contains $X$.
\end{proof}

We are now ready to prove the main theorem.

\begin{proof}[{Proof of Theorem \ref{thm:main1}}]
The set:
\[
C := \{ x\in \Gamma\ |\ a_1x^{-1}a_2xa_3x^{-1}a_4xa_5\cdots a_{k-1}x^{-1}a_k x=1 \}
\]
is closed with respect to $\Zar(\Gamma)$ by definition, and $X\subseteq C$. However, $a_{k+1}\notin C$, since $\left<a_{k+1}\right>$ is a free factor of $\Gamma$. Therefore, $X$ is not dense with respect to $\Zar(\Gamma)$. By Lemma \ref{lem:dense}, $X$ is dense with respect to $\Zars(\Gamma)$, proving that $\Zar(\Gamma)\neq \Zars(\Gamma)$.
\end{proof}


\begin{remark}
Non-empty Zariski open sets are thought of as `big' or generic subsets; however, they can be very sparse with respect to the metric geometry of a given group. Indeed, in \cite{GG} we constructed a finitely generated group $\Gamma$ with a free subgroup $F$, such that the union over all conjugates of $F$ is a Zariski open set, yet has measure zero with respect to the limit of counting measures on balls in the Cayley graph of $\Gamma$ or with respect to any non-degenerate random walk on it.
\end{remark}

We conclude with an observation about other topologies on semigroups and groups. 
A topology on a semigroup $S$ is called a \textit{semigroup topology} if the multiplication map $\mu\colon S\times S\rightarrow S$ is continuous, where $S\times S$ is endowed with the product topology. Similarly, one defines a group topology, further requiring that the inversion map is continuous. See \cite{KOO,BL} for interesting examples of groups whose group topologies are extremely restricted.
While the Zariski topology need not be a semigroup topology, it is contained in the intersection of all $T_1$ semigroup topologies, called the Fr\'echet-Markov topology; a similar statement holds for groups too.
In \cite{E-P}, the authors ask:
\begin{question}[{\cite[Question 2.7]{E-P}}]
    Is the Fr\'echet-Markov topology always contained in the Zariski topology?
\end{question}

For groups, this question was posed by Markov, and settled in the affirmative by him in the countable case \cite{Markov,Markov_Eng}, but in the negative in the uncountable case \cite{Hesse} (attributed e.g. in \cite{Si}). We observe that for semigroups, the answer is negative even in the countable case.

\begin{example} 
Consider the following monomial semigroup: 
\[
S=\left<x_1,x_2,\dots,y_1,y_2,\dots\ |\ x_iy_i=0\ \forall i=1,2,\dots\right>
\]
We claim that the Fr\'echet-Markov topology on $S$ is not contained in the Zariski topology on it.

We first show that any proper non-empty Zariski closed subset $C\subset S$ has a finite intersection with $\{x_1,x_2,\dots,y_1,y_2,\dots\}$. Indeed, it suffices to prove it for a sub-basic closed set in $\Zars(S)$. Let $C=\{s\in S\ |\ P(s)=Q(s)\}$ be a non-empty, proper, sub-basic closed set in $\Zars(S)$, where $P(\bfx),Q(\bfx)\in S[\bfx]$. 
Pick $m\in \mathbb{N}$ large enough such that the letters $x_i,y_i$ with $i\geq m$ do not appear in any of the coefficients of $P(\bfx),Q(\bfx)$. Fix some $i\geq m$ and suppose that $x_i\in C$. Consider the quotient semigroup $\bar{S}=S/ \llangle y_i\rrangle$ (here $\llangle y_i \rrangle$ stands for the congruence generated by equating $y_i$ to zero). We can view $P(\bfx),Q(\bfx)$ as elements of $\bar{S}[\bfx]$, and $x_i\in C$ implies that $P(x_i)=Q(x_i)$ also in $\bar{S}$. But $\langle x_i \rangle\leq \bar{S}$ is a free factor and so $P(x_i)=Q(x_i)$ implies $P(\bfx)=Q(\bfx)$. Thus $C=S$ is not a proper subset, contradicting the assumption. Hence $x_i\notin C$, and a similar argument shows that $y_i\notin C$. It follows that the intersection of $C$ with $\{x_1,x_2,\dots,y_1,y_2,\dots\}$ is finite, as claimed.

Let $D=\{(a,b)\in S\times S\ |\ ab=0\}=\mu^{-1}\left(\{0\}\right)\subset S\times S$ where $\mu\colon S\times S\rightarrow S$ is the multiplication map. Clearly $D\subset S\times S$ is closed in the product topology corresponding to any $T_1$ semigroup topology on $S$, and is therefore closed with respect to the product topology where $S$ is equipped with the Fr\'echet-Markov topology. However, $D$ is not closed in the product topology where $S$ is equipped with the Zariski topology. Assume otherwise. Since $(x_1,y_2)\notin D$, it follows that there exist open subsets $U,V\subset S$ such that $x_1\in U,y_2\in V$ and $D\cap (U\times V)=\emptyset$. By the above argument, for some sufficiently large $i$ it holds that $x_i\in U$ and $y_i\in V$, and so $(x_i,y_i)\in D \cap (U\times V)$, a contradiction.
\end{example}

\end{document}